\documentclass[a4paper,11pt]{amsart}

\usepackage[margin=3cm]{geometry}
\usepackage{amsmath,amssymb,amsthm,mathtools,mathrsfs,esint}
\usepackage{enumitem}
\usepackage{xcolor}
\usepackage{microtype}
\usepackage[colorlinks=true,linkcolor=blue!55!black,citecolor=blue!55!black,urlcolor=blue!55!black]{hyperref}

\newif\ifmarked
\ifdefined\BuildMarked
  \markedtrue
\else
  \markedfalse
\fi

\ifmarked
  \newcommand{\authornote}[1]{%
    \par\medskip\noindent
    \fcolorbox{red!65!black}{yellow!10}{%
      \parbox{0.94\linewidth}{\small\color{red!65!black}\textbf{Author check.} #1}}%
    \par\medskip}
  \newcommand{\newresult}[1]{\textcolor{blue!65!black}{#1}}
\else
  \newcommand{\authornote}[1]{}
  \newcommand{\newresult}[1]{#1}
\fi

\newtheorem{theorem}{Theorem}[section]

\newtheorem{lemma}[theorem]{Lemma}
\newtheorem{corollary}[theorem]{Corollary}

\theoremstyle{remark}
\newtheorem{remark}[theorem]{Remark}

\newcommand{\Ric}{\operatorname{Ric}}
\newcommand{\tr}{\operatorname{tr}}
\newcommand{\II}{\mathrm{II}}

\newcommand{\Id}{\operatorname{Id}}
\newcommand{\Sph}{\mathbb S}

\newcommand{\gbar}{\overline g}
\newcommand{\Rlin}{\mathscr R}

\title[Conformal boundary deformations]{Conformal Boundary Deformations under Ricci Lower Bounds: Eigenvalue Counterexamples}

\author[F. Li]{Fagui Li}
\address{Frontier Interdisciplinary Domain, Beijing Institute of Technology, Zhuhai, Guangdong 519088, P. R. China}
\email{lifagui@bitzh.edu.cn}

\author[Y. Zhao]{Yuhang Zhao${}^{*}$}
\thanks{${}^{*}$Corresponding author.}
\address{School of Mathematics, Nanjing University, Nanjing 210093, P. R. China}
\email{yuhangzhao@smail.nju.edu.cn}
\date{August 25, 2026}
\subjclass[2020]{Primary 53C21; Secondary 58J50, 35P15}
\keywords{Ricci curvature, convex boundary, first Laplace eigenvalue, conformal deformation, hemisphere}

\begin{document}

\begin{abstract}
Let $(M^{n+1},g)$ be a compact Riemannian manifold with boundary. Under the assumptions $\Ric_g\geq ng$ and $\II_g\geq0$,  Wang proposed  a sharp strengthening of the Choi--Wang--Reilly estimate,  asserting that the first nonzero Laplace eigenvalue of the boundary is at least $n$; see [J. Geom. Anal. 31 (2021)].  We disprove this assertion in every dimension $n+1\geq3$.  More precisely, we construct a sequence of metrics on the hemisphere $\Sph^{n+1}_{+}$ converging in $C^\infty$ to the round metric and satisfying
\[
 \Ric_g>n g,\qquad \II_g>0,\qquad
 \lambda_1(\partial\Sph^{n+1}_{+},g|_{\partial\Sph^{n+1}_{+}})<n.
\]
The construction starts from Zhu's infinitesimal conformal deformation, which lowers one branch of the first boundary eigenspace while preserving the normalized Ricci lower bound to first order.  We add a multiple of the spherical height function. 

\end{abstract}

\maketitle

\authornote{The clean version contains no annotations.  This marked version records the principal proof checks and the literature-priority issue that should be revisited immediately before submission.}

\section{Introduction}

Let $(M^{n+1},g)$ be a compact connected Riemannian manifold with smooth nonempty boundary $\Sigma$.  We use the convention
\[
 \II(X,Y)=g(\nabla_X\nu,Y),\qquad X,Y\in T\Sigma,
\]
where $\nu$ is the outward unit normal.  Thus the boundary of a Euclidean ball is strictly convex.

The interaction between lower Ricci curvature, boundary convexity, and intrinsic boundary geometry is naturally studied through Reilly's formula \cite{Reilly1977}.  If a first eigenfunction on the boundary is extended harmonically to the interior, the formula compares its boundary Dirichlet energy with the interior Hessian and Ricci terms.  This is the one-sided counterpart of the argument of Choi and Wang for embedded minimal hypersurfaces \cite{ChoiWang1983}.  In the notation used here, Xiaodong Wang recorded the consequence
\begin{equation}\label{eq:Wang-half}
 \Ric_g\geq n g,\qquad \II_g\geq0
 \quad\Longrightarrow\quad
 \lambda_1(\Sigma)\geq \frac n2,
\end{equation}
and asked whether the optimal lower bound is $n$ \cite[Proposition~7 and Remark~2]{Wang2021}.  The round hemisphere realizes the proposed value.  The factor-two improvement also mirrors Yau's first-eigenvalue conjecture for closed embedded minimal hypersurfaces of the round sphere \cite[Problem~100]{Yau1982}: the coordinate functions give the eigenvalue $n$, while the Choi--Wang argument gives only $n/2$.  Recent quantitative refinements for embedded minimal hypersurfaces and sharp boundary-Laplacian estimates under additional geometric assumptions appear in \cite{DuncanSireSpruck2024,LiuYangBoundary2026}.

Related rigidity and spectral questions for manifolds with nonnegative Ricci curvature and convex boundary provide a broader context.  For boundary rigidity, convexity, and related comparison results, see \cite{HHWang1999,Xia1997,MiaoWang2016,GuoHangWang2021,YanZhu2026}; in low-dimensional sharp comparisons, Hersch's inequality is also a standard ingredient \cite{Hersch1970}.  A parallel line of work concerns the first nonzero Steklov eigenvalue, beginning with Escobar's estimates and conjecture \cite{Escobar1997,Escobar1999}, with subsequent results under stronger geometric assumptions in \cite{XiaXiong2024,DuncanKumar2025,LiuYangSteklov2026}.  Recent conformal counterexamples to the original sharp Steklov conjecture under positive Ricci curvature and strict boundary convexity were obtained by Sun, Wang, and Wang \cite{SunWangWang2026}.

Zhu \cite{Zhu2017} showed that the analogous statement fails if convexity is replaced by minimality: for every $n\geq2$, he constructed a metric on $\Sph^{n+1}_{+}$ satisfying $\Ric\geq ng$, with minimal boundary and with first boundary eigenvalue smaller than $n$.  His proof first produces an explicit infinitesimal conformal factor and then corrects the boundary mean curvature by a diffeomorphism and a higher-order conformal term.  The final second fundamental form is not asserted to be nonnegative.  Thus Zhu's theorem does not by itself decide whether the convexity hypothesis in \eqref{eq:Wang-half} forces the proposed sharp bound.  This distinction is essential: if the induced boundary metric is exactly the standard sphere, the Ricci lower bound and convexity do force hemispherical rigidity by Hang and Wang \cite{HangWang2009}.

The main observation of this note is that the infinitesimal conformal factor in Zhu's construction has a boundary-invisible freedom.  Let $z$ be the spherical height function on the hemisphere, normalized by
\[
 z=0\quad\text{on }\Sigma,\qquad \partial_\nu z=-1\quad\text{on }\Sigma.
\]
The conformal metric variation $-z\gbar$ is the restriction of an ambient pure-diffeomorphism direction for the round metric and hence belongs to the kernel of the linearized normalized Ricci tensor; its generating vector field is not tangent to the boundary.  At the same time, $z$ vanishes on the equator and $-z$ has positive outward normal derivative.  Adding a sufficiently large multiple of this mode therefore has three simultaneous effects:
\begin{enumerate}[label=(\roman*),leftmargin=2.2em]
 \item it does not change the induced boundary metric;
 \item it does not change the first-order Ricci inequality;
 \item it makes the boundary strictly convex under the resulting conformal deformation.
\end{enumerate}
This yields the following theorem.

\begin{theorem}\label{thm:main}
\newresult{For every integer $n\geq2$, there is a sequence of smooth metrics $g_j$ on the hemisphere $\Sph^{n+1}_{+}$ converging in $C^\infty$ to the round metric such that}
\begin{equation*}
 \newresult{\Ric_{g_j}>n g_j,\qquad
 \II_{g_j}>0,\qquad
 \lambda_1\bigl(\partial\Sph^{n+1}_{+},g_j|_{\partial\Sph^{n+1}_{+}}\bigr)<n.}
\end{equation*}
Moreover, the boundary is umbilic for each $g_j$.
\end{theorem}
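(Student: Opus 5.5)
We will take metrics of the form $g_t=e^{2tf}\gbar$ with $f=\varphi-Az$. Here $\gbar$ is the round metric on $\Sph^{n+1}_{+}$, $\varphi$ is Zhu's infinitesimal conformal factor, $A>0$ is a large constant, and $t=t_j\downarrow0$. Conformal deformations keep the boundary umbilic, because the trace-free part of $\II$ scales conformally and vanishes for $\gbar$. Moreover $g_t\to\gbar$ in $C^\infty$. The proof then reduces to checking the three strict inequalities for small $t>0$ by a first-order expansion in $t$, with uniform control of the second-order remainders.

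\emph{Step 1 (boundary).} Since $z=0$ on $\Sigma$, the induced metric is $e^{2t\varphi|_\Sigma}\gbar|_\Sigma$, independent of $A$. Zhu's factor is chosen so that the first variation of $\lambda_1$ is negative along one branch of the $n$-dimensional first eigenspace of the round $\Sph^n$. Concretely, we use the standard perturbation formula: the derivative of the eigenvalue branches equals the eigenvalues of the quadratic form $u\mapsto-\int_\Sigma\varphi\bigl((n-2)|\nabla u|^2+\ldots\bigr)$ restricted to the coordinate functions. We then cite Zhu \cite{Zhu2017}, which gives $\lambda_1(\Sigma,g_t|_\Sigma)=n-ct+O(t^2)$ with $c>0$. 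This yields $\lambda_1<n$ for small $t$.

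\emph{Step 2 (convexity).} For $\tilde g=e^{2w}\gbar$ we have $\tilde\II=e^{w}\bigl(\II_{\gbar}+(\partial_\nu w)\gbar\bigr)$ on $\Sigma$. Here $\II_{\gbar}=0$, since the equator is totally geodesic, and $\partial_\nu w=t(\partial_\nu\varphi+A)$. Choosing $A>\max_\Sigma|\partial_\nu\varphi|$ therefore gives $\II_{g_t}\geq t\,\delta\,\gbar$ with $\delta>0$. This is exact rather than asymptotic, so $\II_{g_t}>0$ for all $t>0$.

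\emph{Step 3 (Ricci), the main obstacle.} Write $\Ric_{g_t}-ng_t=t\,\Rlin(f)+O(t^2)$, where
\[
\Rlin(f)=-(n-1)\nabla^2f-(\Delta f)\gbar-2nf\gbar
\]
is the linearization of $\Ric-n g$ at $\gbar$ in the conformal direction. Using $\nabla^2z=-z\gbar$ and $\Delta z=-(n+1)z$, we get $\Rlin(z)=\bigl((n-1)+(n+1)-2n\bigr)z\gbar=0$, so the $A$-term drops out at first order. Zhu's factor satisfies $\Rlin(\varphi)\geq0$. Strictness is not automatic, so we will modify $\varphi$: add a small multiple $\varepsilon\psi$ with $\Rlin(\psi)>0$ uniformly on $\overline{\Sph^{n+1}_{+}}$, for instance $\psi=-1$, since $\Rlin(-1)=2n\gbar$. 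Adding a constant multiplies the boundary metric by a constant, so $\lambda_1$ changes only by the factor $e^{2t\varepsilon}$, which preserves $\lambda_1<n$ for small $\varepsilon$. Alternatively, $\varepsilon$ can be absorbed into Step 1 by noting $-c+2\varepsilon n<0$ when $\varepsilon$ is small.

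We then get $\Rlin(f)\geq\kappa\gbar$ with $\kappa>0$ on the compact closure. Because $f$ is a fixed smooth function, the $O(t^2)$ remainder is bounded in $C^0$ uniformly. Hence $\Ric_{g_t}-ng_t\geq(\kappa t-Ct^2)\gbar>0$ for small $t$.

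The delicate point is verifying that Zhu's factor, as explicitly given, satisfies $\Rlin(\varphi)\geq0$ up to the boundary with its lowering effect intact. We would first try to recompute Zhu's explicit $\varphi$, a combination of $z$-dependent radial profiles times a quadratic harmonic on $\Sigma$, and check the sign directly. Finally, we set $g_j=g_{t_j}$.
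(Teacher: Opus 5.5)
Your proposal is correct and follows the paper's route. You take Zhu's conformal factor, add a large multiple of the boundary-invisible, Ricci-neutral height mode $-z$ to force $\partial_\nu f>0$, and take both the Ricci sign and the decreasing eigenvalue branch from Zhu. The paper packages these as Lemma~\ref{lem:Zhu-input} from Zhu's Proposition~4.1, so the recomputation you flag as delicate is not needed.

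The only real difference is how you get strict Ricci. The paper proves $\Ric_{g_t}\geq(n-Kt^2)g_t$ and rescales by $(1-Lt^2)$, an $O(t^2)$ loss that cannot compete with the $O(t)$ spectral drop. Your shift $f\mapsto f-\varepsilon$ is instead a first-order rescaling by $e^{-2\varepsilon t}$, giving $\Rlin(f)\geq 2n\varepsilon\gbar>0$ at the cost of the condition $2n\varepsilon<c$; both work.

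One minor slip: the first eigenspace of $\Sph^n$ is $(n+1)$-dimensional, not $n$-dimensional.
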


In the ambient-dimensional notation of \cite{Wang2021}, Theorem~\ref{thm:main} disproves the proposed lower bound $\lambda_1(\partial M)\geq \dim M-1$ for every $\dim M\geq3$, even when the Ricci inequality and boundary convexity are both strict.  The lower bound $n/2$ in \eqref{eq:Wang-half} is not contradicted; our perturbations show that the missing factor $2$ cannot be recovered from these one-sided hypotheses alone.

\begin{remark}
    When $n=1$, the hypotheses become $K\geq1$ and nonnegative boundary geodesic curvature. By \cite[Theorem~4]{HangWang2009},  the first nonzero eigenvalue satisfies $\lambda_1(\partial M)\geq1$. Thus,  counterexamples occur only when  $n \geq 2$.
\end{remark}
\begin{remark}\label{rem:hang-wang}
There is no conflict with the boundary rigidity theorem of Hang and Wang \cite{HangWang2009}.  That theorem additionally assumes that the induced boundary metric is the standard round metric.  Here the induced metric is deformed in precisely the spectral direction inherited from Zhu's conformal factor.
\end{remark}

The paper is organized as follows.  Section~\ref{sec:prelim} records the conformal variation formulas, the perturbation-theoretic point needed for the multiple first eigenvalue, and the precise input from Zhu.  Section~\ref{sec:mode} identifies the Ricci-neutral convexifying mode, and Theorem~\ref{thm:main} is proved in Section~\ref{sec:proof-main}.

\section{Conformal variations on the round hemisphere}\label{sec:prelim}

Let $(\Sph^{n+1}_{+},\gbar)$ be the standard unit hemisphere, let
\[
 \Sigma=\partial\Sph^{n+1}_{+}\cong\Sph^n,
\]
and let $\nu$ denote the outward unit normal along $\Sigma$.  The round metric satisfies
\[
 \Ric_{\gbar}=n\gbar,
 \qquad
 \II_{\gbar}=0,
 \qquad
 \lambda_1(\Sigma,\gbar|_\Sigma)=n.
\]
We use the Laplacian sign convention $\Delta=\tr\nabla^2$, so that $\Delta x_i=-n x_i$ on the unit $n$-sphere.
All inequalities between symmetric two-tensors are understood pointwise as inequalities of quadratic forms.

\subsection{Ricci and boundary variations}

Consider a conformal family
\begin{equation}\label{eq:conformal-family}
 g_t=e^{t\varphi}\gbar,
 \qquad \varphi\in C^\infty(\Sph^{n+1}_{+}).
\end{equation}
Its initial metric variation is $h=\varphi\gbar$.  Since $\gbar$ is Einstein, the variation of the normalized Ricci endomorphism is equivalent to the variation of $\Ric_{g_t}-ng_t$.  The standard conformal formula gives
\begin{equation}\label{eq:lin-ricci}
 -2\left.\frac{d}{dt}\right|_{t=0}
   \bigl(\Ric_{g_t}-ng_t\bigr)
 =\bigl(\Delta\varphi+2n\varphi\bigr)\gbar
  +(n-1)\nabla^2\varphi.
\end{equation}
Following Zhu, define the linearized normalized Ricci tensor by
\begin{equation*}
 \Rlin(\varphi)
 :=\left.\frac{d}{dt}\right|_{t=0}
     \bigl(\Ric_{g_t}-ng_t\bigr).
\end{equation*}
More explicitly, because $\Ric_{\gbar}=n\gbar$,
\begin{equation}\label{eq:endomorphism-covariant-translation}
 \left.\frac{d}{dt}\right|_{t=0}
 \bigl(g_t^{-1}\Ric_{g_t}-n\Id\bigr)
 =\gbar^{-1}\Rlin(\varphi).
\end{equation}
Thus Zhu's endomorphism inequality and the covariant inequality $\Rlin(\varphi)\geq0$ are identical at the round metric.  For later reference, the full conformal formula along \eqref{eq:conformal-family} is
\begin{align*}
 \Ric_{g_t}
 &=n\gbar-\frac{n-1}{2}t\nabla^2\varphi
   -\frac12t(\Delta\varphi)\gbar\\
 &\quad+\frac{n-1}{4}t^2
  \bigl(d\varphi\otimes d\varphi-|\nabla\varphi|^2\gbar\bigr),
\end{align*}
where all derivatives on the right are taken with respect to $\gbar$.

For a general conformal change $\widetilde g=e^w g$, the second fundamental form transforms as
\begin{equation}\label{eq:II-conformal}
 \II_{\widetilde g}
 =e^{w/2}\left(\II_g+\frac12(\partial_\nu w)g|_{T\Sigma}\right),
\end{equation}
where the derivative and the normal on the right are taken with respect to $g$.  Since the equator is totally geodesic, \eqref{eq:II-conformal} yields, for \eqref{eq:conformal-family},
\begin{equation}\label{eq:II-family}
 \II_{g_t}
 =\frac t2 e^{t\varphi/2}(\partial_\nu\varphi)\gbar|_{T\Sigma}.
\end{equation}
In particular, if $\partial_\nu\varphi>0$ on $\Sigma$, then the boundary is strictly convex and umbilic for every sufficiently small $t>0$.

\subsection{Analytic splitting of the first boundary eigenspace}

The first nonzero eigenvalue $n$ of the round $\Sph^n$ has multiplicity $n+1$.  For a real-analytic family of boundary metrics, Rellich--Kato perturbation theory gives $n+1$ real-analytic eigenvalue branches issuing from this cluster \cite{Berger1973,Kato1995}.  The neighboring eigenvalue clusters are separated from $n$, so each branch remains a positive eigenvalue for small $|t|$.  If one branch has negative derivative at zero, then the variational ordering of the spectrum gives
\begin{equation*}
 \lambda_1(\Sigma,\widehat g_t)
 \leq n-\gamma t+O(t^2)
\end{equation*}
for some $\gamma>0$ and all small $t>0$.  We will only use this one-sided consequence.

The following key lemma follows from Zhu's explicit conformal-factor construction \cite{Zhu2017}.

\begin{lemma}\label{lem:Zhu-input}
For every $n\geq2$, there exists $f\in C^\infty(\Sph^{n+1}_{+})$ such that:
\begin{enumerate}[label=(\roman*),leftmargin=2.2em]
 \item $\Rlin(f)\geq0$ on $\Sph^{n+1}_{+}$;
 \item for the boundary metric family
 \[
  \widehat g_t^{\,f}=e^{t f|_\Sigma}\gbar|_\Sigma,
 \]
 an analytic eigenvalue branch issuing from $n$ has strictly negative derivative.  Equivalently, there is a constant $\gamma>0$ such that
 \begin{equation}\label{eq:Zhu-branch}
  \lambda_1(\Sigma,\widehat g_t^{\,f})
  \leq n-\gamma t+O(t^2).
 \end{equation}
\end{enumerate}
\end{lemma}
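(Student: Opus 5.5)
The plan is to take the explicit infinitesimal conformal factor $f$ from Zhu's construction \cite{Zhu2017} and check the two properties directly in the present normalization, rather than redoing his construction. Part (i) reduces to a pointwise matrix inequality via \eqref{eq:lin-ricci}:
\[
 \Rlin(f)=-\tfrac12\bigl[(\Delta f+2nf)\gbar+(n-1)\nabla^2 f\bigr]\geq0 .
\]
By \eqref{eq:endomorphism-covariant-translation}, this is the same as Zhu's endomorphism inequality for the linearized Ricci endomorphism at the round metric. So I will transcribe his explicit $f$ and confirm that his sign conventions for $\Delta$ and for the normalized Ricci operator agree with ours, namely $\Delta=\tr\nabla^2$ and the factor $-2$ in \eqref{eq:lin-ricci}. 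Then (i) is exactly his first-order Ricci statement. Zhu's $f$ is built from ambient polynomial expressions in the coordinates $x_0,\dots,x_{n+1}$, with $z=x_{n+1}$. I would therefore verify the inequality by computing $\nabla^2 f$ with the standard identity $\nabla^2 x_i=-x_i\gbar$ and the product rule, and then diagonalizing the resulting quadratic form. This is the step I expect to be the main obstacle, since it is where the precise shape of $f$ matters.

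For (ii), I will compute the first variation of the boundary eigenvalues under $\widehat g_t^{\,f}=e^{tf|_\Sigma}\gbar|_\Sigma$. For $u$ in the eigenspace $E=\operatorname{span}\{x_0,\dots,x_n\}|_\Sigma$, the Rayleigh quotient along the family is
\[
 \frac{\int_\Sigma e^{(n-2)tf/2}|\nabla u|^2}{\int_\Sigma e^{ntf/2}u^2}.
\]
Differentiating at $t=0$ and using $\int|\nabla u|^2=n\int u^2$ gives the quadratic form
\[
 Q(u)=\frac{\int_\Sigma f\bigl(\tfrac{n-2}{2}|\nabla u|^2-\tfrac{n^2}{2}u^2\bigr)}{\int_\Sigma u^2}
\]
on $E$. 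By the Rellich--Kato discussion in the preceding subsection, the derivatives of the $n+1$ analytic branches are the eigenvalues of $Q$ on $E$ with respect to the $L^2$ inner product. So it suffices to exhibit one $u\in E$ with $Q(u)<0$. The estimate \eqref{eq:Zhu-branch} then follows from the variational ordering, since $\lambda_1$ lies below every branch.

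Concretely, I will take $u=x_1|_\Sigma$, or whichever coordinate Zhu's factor singles out. I will evaluate the numerator using $|\nabla x_1|^2=1-x_1^2$ on $\Sigma$ and the standard moments $\int_{\Sph^n}x_1^2$ and $\int_{\Sph^n}x_1^4$, which is elementary. Zhu computes exactly this quantity to obtain his eigenvalue drop, so I expect the sign to agree with his. The one point to watch is that his final metric involves additional diffeomorphism and higher-order corrections. Those corrections enter only at order $t^2$, or leave the induced boundary metric unchanged to first order. Hence only the first-order factor $f$ is needed here, and his computation of the negative branch derivative transfers verbatim.
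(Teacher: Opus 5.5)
Your proposal is correct and follows essentially the same route as the paper: take Zhu's infinitesimal conformal factor, convert his endomorphism inequality into $\Rlin(f)\geq0$ via \eqref{eq:endomorphism-covariant-translation}, and obtain the decreasing branch from a negative eigenvalue of the first-variation form on $E_n$, using Rellich--Kato and the variational ordering. The only difference is that you derive that form explicitly from the conformal Rayleigh quotient, $Q(u)=\int_\Sigma f\bigl(\tfrac{n-2}{2}|\nabla u|^2-\tfrac{n^2}{2}u^2\bigr)/\int_\Sigma u^2$, rather than reading it off Zhu's Proposition~4.1. Like the paper's invariant formulation, this keeps the sign check independent of how a basis of $E_n$ is indexed.
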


\begin{proof}
Let $E_n=\ker(\Delta_{\Sph^n}+n)$, which has dimension $n+1$.  Proposition~4.1 of Zhu \cite{Zhu2017} supplies a smooth $f$ for which
\[
 \left.\frac{d}{dt}\right|_{t=0}
 \bigl(g_t^{-1}\Ric_{g_t}\bigr)\geq0
\]
whenever $g'_0=f\gbar$.  By \eqref{eq:endomorphism-covariant-translation}, this is exactly $\Rlin(f)\geq0$.

The same proposition computes the self-adjoint compression to $E_n$ of the first variation of the boundary Laplacian.  In Zhu's sign convention, its eigenvalues give the negatives of the derivatives of the positive Laplace eigenvalues, and the calculation exhibits one corresponding eigenvalue derivative $\mu<0$.  Apply this statement to the analytic conformal family $g_t=e^{tf}\gbar$.  Rellich--Kato theory, in the form used in \cite[Section~4.2]{Zhu2017}, produces an analytic branch
\[
 \lambda(t)=n+\mu t+O(t^2).
\]
Since $\mu<0$, setting $\gamma=-\mu>0$ gives \eqref{eq:Zhu-branch}.  This invariant formulation is independent of the choice and indexing of a basis for $E_n$; only the existence of a negative eigenvalue of the first-variation form is used.
\end{proof}

\authornote{Proof check.  This proof now explicitly translates Zhu's endomorphism inequality into the covariant tensor inequality used here and derives the decreasing analytic branch without copying the inconsistent basis indices in the displayed matrix of Zhu's Proposition~4.1.}

\section{A boundary-invisible, Ricci-neutral convexifying mode}\label{sec:mode}

Let $r$ be the distance from the north pole of the round hemisphere and define
\begin{equation*}
 z=\cos r.
\end{equation*}
The function $z$ is the restriction of a linear coordinate function on the round sphere.  It satisfies
\begin{equation}\label{eq:z-identities}
 \nabla^2z=-z\gbar,
 \qquad
 \Delta z=-(n+1)z,
 \qquad
 z|_\Sigma=0,
 \qquad
 \partial_\nu z|_\Sigma=-1.
\end{equation}

\begin{lemma}\label{lem:kernel-mode}
For every $f\in C^\infty(\Sph^{n+1}_{+})$ and every constant $C\in\mathbb R$, set
\begin{equation*}
 f_C=f-Cz.
\end{equation*}
Then:
\begin{enumerate}[label=(\roman*),leftmargin=2.2em]
 \item $f_C|_\Sigma=f|_\Sigma$;
 \item $\Rlin(f_C)=\Rlin(f)$;
 \item $\partial_\nu f_C=\partial_\nu f+C$ on $\Sigma$.
\end{enumerate}
Consequently, $C$ can be chosen so that $\partial_\nu f_C>0$ everywhere on $\Sigma$, without changing either the induced boundary metric family or the first-order normalized Ricci tensor.
\end{lemma}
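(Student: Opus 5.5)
The proof is a direct verification from the identities \eqref{eq:z-identities} together with linearity of the operators involved. Items (i) and (iii) are immediate. Since $z|_\Sigma=0$, we get $f_C|_\Sigma=f|_\Sigma-Cz|_\Sigma=f|_\Sigma$. Since $\partial_\nu z=-1$ on $\Sigma$, we get $\partial_\nu f_C=\partial_\nu f-C\,\partial_\nu z=\partial_\nu f+C$. In particular, the boundary conformal family $\widehat g^{\,f_C}_t=e^{tf_C|_\Sigma}\gbar|_\Sigma$ coincides with $\widehat g^{\,f}_t$ for all $t$.

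For item (ii), I use the explicit linearization \eqref{eq:lin-ricci}. The right-hand side is linear in $\varphi$, so $\Rlin$ is a linear map and $\Rlin(f_C)=\Rlin(f)-C\,\Rlin(z)$. It therefore suffices to show $\Rlin(z)=0$. Substituting $\nabla^2z=-z\gbar$ and $\Delta z=-(n+1)z$ into \eqref{eq:lin-ricci} gives
\[
 -2\Rlin(z)=\bigl(-(n+1)z+2nz\bigr)\gbar-(n-1)z\gbar=0 .
\]
This is the computational core of the lemma. Conceptually it reflects the fact that $z\gbar$ is (up to sign) $\mathcal L_X\gbar$ for the conformal Killing field $X=\nabla z$, since $\mathcal L_{\nabla z}\gbar=2\nabla^2z=-2z\gbar$. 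Such a direction lies in the kernel of the linearized Einstein operator at an Einstein metric. I would record this only as a remark, because the algebraic check above is self-contained. The one point to verify carefully is that \eqref{eq:lin-ricci} is stated with the normalization term $-ng_t$ included, so that the $2n\varphi$ coefficient is correct. I would double-check it against the displayed full conformal formula: the $t$-coefficient of $\Ric_{g_t}-ng_t$ is $-\tfrac{n-1}{2}\nabla^2\varphi-\tfrac12(\Delta\varphi)\gbar-n\varphi\gbar$, which matches.

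For the final assertion, $\Sigma$ is compact and $f$ is smooth, so $m:=\min_\Sigma\partial_\nu f$ is finite. Choosing any $C>-m$, for instance $C=1+\max_\Sigma|\partial_\nu f|$, gives $\partial_\nu f_C\geq \partial_\nu f+C>0$ everywhere on $\Sigma$ by (iii). Combined with (i) and (ii), this shows that the induced boundary metric family and the first-order normalized Ricci tensor are both unchanged. I do not expect any real obstacle here; the only potential pitfall is sign conventions (the outward normal and $\partial_\nu z=-1$), which are fixed by \eqref{eq:z-identities}.
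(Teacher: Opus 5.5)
Your proof is correct and follows the paper's argument essentially verbatim: (i) and (iii) come from the boundary identities in \eqref{eq:z-identities}, (ii) comes from substituting $\nabla^2 z=-z\gbar$ and $\Delta z=-(n+1)z$ into \eqref{eq:lin-ricci} and using linearity, and the final assertion from choosing $C>-\min_\Sigma\partial_\nu f$. The differences are cosmetic: you compute $\Rlin(z)$ rather than $\Rlin(-Cz)$, and you cross-check the coefficient $2n$ in \eqref{eq:lin-ricci} against the full conformal formula, a sanity check the paper omits (your pure-gauge interpretation appears in the paper as Remark~\ref{rem:pure-gauge}).
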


\begin{proof}
The first and third assertions follow immediately from the boundary identities in \eqref{eq:z-identities}.  For the second assertion, apply \eqref{eq:lin-ricci} to $-Cz$.  Since
\[
 \Delta(-Cz)=-(n+1)(-Cz),
 \qquad
 \nabla^2(-Cz)=-(-Cz)\gbar,
\]
we have
\begin{align*}
 &\bigl(\Delta(-Cz)+2n(-Cz)\bigr)\gbar
 +(n-1)\nabla^2(-Cz)\\
 &\qquad=(n-1)(-Cz)\gbar-(n-1)(-Cz)\gbar=0.
\end{align*}
Thus $\Rlin(-Cz)=0$, and linearity gives $\Rlin(f_C)=\Rlin(f)$.

Finally, because $\Sigma$ is compact, choose
\[
 C> -\min_\Sigma\partial_\nu f.
\]
Then $\partial_\nu f_C>0$ on $\Sigma$.
\end{proof}

The geometric meaning of the kernel calculation is worth recording.

\begin{remark}\label{rem:pure-gauge}
Since $\nabla^2z=-z\gbar$,
\begin{equation*}
 -Cz\gbar=\frac C2\mathcal L_{\nabla z}\gbar.
\end{equation*}
Thus the added metric variation is a pure diffeomorphism direction for the round Einstein metric.  The generating vector field is normal rather than tangent to the equator.  Hence the variation is invisible to the induced boundary metric at first order---and, for the conformal family used below, exactly invisible---while it changes the extrinsic geometry of the fixed boundary hypersurface.  The fact that the generating flow does not preserve that hypersurface is precisely the mechanism that produces strict convexity.
\end{remark}

\begin{corollary}\label{cor:exact-boundary}
For $f_C$ as in Lemma~\ref{lem:kernel-mode}, the conformal families
\[
 e^{t f_C}\gbar
 \quad\text{and}\quad
 e^{t f}\gbar
\]
induce exactly the same metric on $\Sigma$ for every $t$ for which they are defined.
\end{corollary}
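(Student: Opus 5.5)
The plan is to compare the two metrics pointwise on $\Sigma$ by restricting to tangent vectors. For $X,Y\in T_p\Sigma$ with $p\in\Sigma$, the induced metric of $e^{tf_C}\gbar$ is
\[
 \bigl(e^{tf_C}\gbar\bigr)(X,Y)=e^{tf_C(p)}\,\gbar(X,Y).
\]
This depends only on the value of the conformal factor at $p$, and not on any normal derivative. The whole argument therefore reduces to the pointwise identity of the factors on $\Sigma$.

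By Lemma~\ref{lem:kernel-mode}(i), or directly from $z|_\Sigma=0$ in \eqref{eq:z-identities}, we have $f_C(p)=f(p)-Cz(p)=f(p)$ for every $p\in\Sigma$. Hence
\[
 e^{tf_C(p)}=e^{tf(p)}\qquad\text{for all real } t .
\]
Consequently
\[
 \bigl(e^{tf_C}\gbar\bigr)\big|_{T\Sigma}=e^{tf|_\Sigma}\,\gbar|_\Sigma=\bigl(e^{tf}\gbar\bigr)\big|_{T\Sigma}=\widehat g_t^{\,f}
\]
as symmetric two-tensors on $\Sigma$, for every $t$. Both families are in fact defined for all $t$, since the exponential is positive, so no smallness assumption on $t$ is needed.

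There is no real obstacle. The one point to state explicitly is that the equality is exact for all $t$ and not merely to first order. This holds because the conformal factor is an exact exponential of a function whose boundary trace is unchanged. It contrasts with Remark~\ref{rem:pure-gauge}, where the diffeomorphism interpretation alone would only give first-order invisibility.

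As a consequence, the boundary spectral estimate \eqref{eq:Zhu-branch} of Lemma~\ref{lem:Zhu-input} applies verbatim to the family $e^{tf_C}\gbar$.
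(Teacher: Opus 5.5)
Your proposal is correct and is essentially the paper's proof: the paper also deduces the claim immediately from $f_C|_\Sigma=f|_\Sigma$ (which holds because $z|_\Sigma=0$). You make explicit that the induced metric at $p\in\Sigma$ depends only on the conformal factor's value $e^{tf_C(p)}$, and that this gives exact equality for every $t$.
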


\begin{proof}
This follows from $f_C|_\Sigma=f|_\Sigma$.
\end{proof}

\section{Proof of   Theorem \ref{thm:main}}\label{sec:proof-main}

We first isolate the elementary passage from a first-order Ricci inequality to an exact inequality after a quadratic rescaling.

\begin{lemma}\label{lem:quadratic-recovery}
Let $t\mapsto g_t$ be a $C^2$ curve, in the $C^2$ topology, of smooth metrics on a compact manifold, possibly with boundary, with
\[
 g_0=\gbar,
 \qquad
 \Ric_{\gbar}=n\gbar,
 \qquad
 \left.\frac{d}{dt}\right|_{t=0}(\Ric_{g_t}-ng_t)\geq0.
\]
Then there are constants $K>0$ and $t_0>0$ such that
\begin{equation}\label{eq:quadratic-ricci}
 \Ric_{g_t}\geq(n-Kt^2)g_t
 \qquad\text{for }0\leq t\leq t_0.
\end{equation}
If $L>K/n$ and
\begin{equation*}
 \widetilde g_t=(1-Lt^2)g_t,
\end{equation*}
then, after decreasing $t_0$ if necessary,
\begin{equation*}
 \Ric_{\widetilde g_t}>n\widetilde g_t
 \qquad\text{for }0<t\leq t_0.
\end{equation*}
\end{lemma}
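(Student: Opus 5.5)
The plan is to Taylor expand the tensor $T_t:=\Ric_{g_t}-ng_t$ to second order in $t$. Then the quadratic loss will be absorbed by a constant rescaling, using that $\Ric$ is invariant under constant scalings of the metric.

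\emph{Step 1.} Since $t\mapsto g_t$ is $C^2$ in the $C^2$ topology and $\Ric$ is a smooth (polynomial in $g,g^{-1}$ and the first two derivatives of $g$) expression, the map $t\mapsto T_t$ is a $C^0$-valued curve that is twice differentiable with bounded second derivative on $[0,t_1]$. I expect that $C^2$ regularity in $t$ with values in $C^2$ metrics suffices to control $\partial_t^2\Ric_{g_t}$ uniformly; this bookkeeping of regularity is the only delicate point. Taylor's theorem with remainder, uniform over the compact manifold, then gives
\[
 T_t = T_0 + t\,\dot T_0 + R_t,\qquad |R_t|_{\gbar}\leq K_1 t^2 .
\]
Here $T_0=0$ and $\dot T_0\geq0$ by hypothesis, so $T_t\geq -K_1t^2\gbar$ as quadratic forms. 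For small $t$, $g_t\geq \tfrac12\gbar$ by continuity, so $\gbar\leq 2g_t$ and hence $T_t\geq -2K_1t^2 g_t$. Setting $K=2K_1$ gives \eqref{eq:quadratic-ricci}.

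\emph{Step 2.} Ricci curvature is invariant under multiplication of the metric by a positive constant. So for $1-Lt^2>0$,
\[
 \Ric_{\widetilde g_t}=\Ric_{g_t}\geq (n-Kt^2)g_t=\frac{n-Kt^2}{1-Lt^2}\,\widetilde g_t .
\]
It remains to check that $\frac{n-Kt^2}{1-Lt^2}>n$ for $0<t\leq t_0$. This is equivalent to $n-Kt^2>n-nLt^2$, i.e.\ $(nL-K)t^2>0$, which holds since $L>K/n$ and $t>0$. Decreasing $t_0$ so that $Lt_0^2<1$ keeps $\widetilde g_t$ a metric.

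\emph{Main obstacle.} There is no genuine obstacle here. The one point requiring care is Step 1: making the remainder estimate uniform on the manifold with boundary, and converting the $\gbar$-norm bound into a quadratic-form bound relative to $g_t$. Compactness and continuity of $g_t$ in $C^2$ handle both.
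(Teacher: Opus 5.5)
Your proposal is correct and follows essentially the same route as the paper: a $C^0$ Taylor expansion of $\Ric_{g_t}-ng_t$ with $O(t^2)$ remainder, uniform equivalence of $g_t$ and $\gbar$ to turn $-K_1t^2\gbar$ into $-Kt^2g_t$, and invariance of $\Ric$ under constant rescaling, so that the factor $1-Lt^2$ contributes $+nLt^2g_t$ and beats the loss when $nL>K$. The regularity point you flag works out as you expect: $g\mapsto\Ric_g$ is a smooth map from $C^2$ metrics to $C^0$ tensors, so $t\mapsto\Ric_{g_t}-ng_t$ is a $C^2$ curve in $C^0$ and the Banach-space Taylor estimate applies.
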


\begin{proof}
Set $S_t=\Ric_{g_t}-ng_t$.  Taylor's theorem in the $C^0$ norm of symmetric two-tensors gives
\[
 S_t=tS'_0+t^2R_t,
\]
where $R_t$ is uniformly bounded with respect to $\gbar$ for $t$ small.  Since $S'_0\geq0$, there is $K_0>0$ such that
\[
 S_t\geq-K_0t^2\gbar.
\]
The metrics $g_t$ and $\gbar$ are uniformly equivalent for small $t$, so, after enlarging the constant, this becomes
\[
 S_t\geq-Kt^2g_t,
\]
which is \eqref{eq:quadratic-ricci}.

Decrease $t_0$ so that $1-Lt^2>0$.  Constant rescaling does not change the Ricci tensor as a covariant two-tensor.  Consequently,
\begin{align*}
 \Ric_{\widetilde g_t}-n\widetilde g_t
 &=\Ric_{g_t}-ng_t+nLt^2g_t\\
 &\geq(nL-K)t^2g_t>0
\end{align*}
for $0<t\leq t_0$, because $nL>K$.
\end{proof}

\begin{proof}[\textbf{Proof of Theorem~\ref{thm:main}}]
Fix $n\geq2$ and choose the function $f$ supplied by Lemma~\ref{lem:Zhu-input}.  By Lemma~\ref{lem:kernel-mode}, choose $C$ so large that
\begin{equation}\label{eq:normal-positive}
 f_C=f-Cz,
 \qquad
 \partial_\nu f_C>0\quad\text{on }\Sigma.
\end{equation}
Consider the analytic conformal family
\begin{equation*}
 g_t=e^{t f_C}\gbar.
\end{equation*}

Because $\Rlin(f_C)=\Rlin(f)\geq0$, the family $g_t$ satisfies the hypothesis of Lemma~\ref{lem:quadratic-recovery}.  Hence, for suitable $K>0$,
\begin{equation*}
 \Ric_{g_t}\geq(n-Kt^2)g_t.
\end{equation*}
Choose $L>K/n$ and define
\begin{equation}\label{eq:gtilde-main}
 \widetilde g_t=(1-Lt^2)g_t.
\end{equation}
For all sufficiently small $t>0$, Lemma~\ref{lem:quadratic-recovery} gives
\begin{equation*}
 \Ric_{\widetilde g_t}>n\widetilde g_t.
\end{equation*}

Next, the conformal boundary formula \eqref{eq:II-family} and \eqref{eq:normal-positive} imply
\begin{equation*}
 \II_{g_t}
 =\frac t2e^{tf_C/2}(\partial_\nu f_C)\gbar|_{T\Sigma}>0.
\end{equation*}
Thus the boundary is strictly convex and umbilic.  Under the constant scaling in \eqref{eq:gtilde-main},
\[
 \II_{\widetilde g_t}=\sqrt{1-Lt^2}\,\II_{g_t},
\]
so strict convexity and umbilicity persist.

It remains to check the boundary eigenvalue.  By Corollary~\ref{cor:exact-boundary},
\begin{equation*}
 g_t|_{T\Sigma}=e^{t f|_\Sigma}\gbar|_{T\Sigma}.
\end{equation*}
Consequently, Lemma~\ref{lem:Zhu-input} gives an eigenvalue branch satisfying
\begin{equation*}
 \lambda_1(\Sigma,g_t|_{T\Sigma})
 \leq n-\gamma t+O(t^2)
\end{equation*}
for some $\gamma>0$.  Boundary Laplace eigenvalues scale inversely under constant scaling, and hence
\begin{align*}
 \lambda_1(\Sigma,\widetilde g_t|_{T\Sigma})
 &=(1-Lt^2)^{-1}\lambda_1(\Sigma,g_t|_{T\Sigma})\\
 &\leq n-\gamma t+O(t^2)<n,
\end{align*}
for all sufficiently small $t>0$; here we used $(1-Lt^2)^{-1}=1+Lt^2+O(t^4)$.

Finally, $\widetilde g_t\to\gbar$ in $C^\infty$ as $t\downarrow0$.  Taking any sequence $t_j\downarrow0$ for which the preceding inequalities hold and setting $g_j=\widetilde g_{t_j}$ proves the theorem.
\end{proof}

\authornote{Core proof check: the rescaling is of order $t^2$, whereas the boundary eigenvalue decreases by a strictly negative term of order $t$.  This order separation is essential and should be retained explicitly.}

\begin{corollary}\label{cor:Wang-false}
For every integer $m\geq3$, there exists a smooth compact $m$-dimensional Riemannian manifold $(M^m,g)$ with strictly convex boundary such that
\[
 \Ric_g>(m-1)g,
 \qquad
 \lambda_1(\partial M,g|_{\partial M})<m-1.
\]
In particular, the proposed sharp improvement of \cite[Proposition~7]{Wang2021} is false in every dimension $m\geq3$.
\end{corollary}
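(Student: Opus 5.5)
The plan is to read Corollary~\ref{cor:Wang-false} off Theorem~\ref{thm:main} through a change of notation. Set $n=m-1$. The condition $m\geq3$ is then exactly $n\geq2$, so Theorem~\ref{thm:main} applies. Take $M^m=\Sph^{n+1}_{+}$ and let $g=g_j$ for any fixed index $j$ of the sequence produced there. The hemisphere is a smooth compact $m$-dimensional manifold with smooth boundary $\partial M=\partial\Sph^{n+1}_{+}\cong\Sph^{n}$, and each $g_j$ is a smooth metric on it.

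Next I translate each conclusion of Theorem~\ref{thm:main}. The inequality $\Ric_{g_j}>ng_j$ becomes $\Ric_g>(m-1)g$. The inequality $\II_{g_j}>0$ is a strict inequality of quadratic forms on $T\Sigma$, under the sign convention fixed in the introduction, where the boundary of a Euclidean ball is strictly convex. So it says precisely that $\partial M$ is strictly convex. Finally, $\lambda_1(\partial\Sph^{n+1}_{+},g_j|_{\partial\Sph^{n+1}_{+}})<n$ becomes $\lambda_1(\partial M,g|_{\partial M})<m-1$.

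For the last sentence of the corollary, I recall that in the ambient notation of \cite{Wang2021} the proposed sharp bound reads $\lambda_1(\partial M)\geq\dim M-1$, under the hypotheses $\Ric\geq(\dim M-1)g$ and $\II\geq0$. The manifold just constructed satisfies these hypotheses, and in fact satisfies them strictly. Yet its eigenvalue is strictly below $\dim M-1=m-1$, so it is a counterexample in every dimension $m\geq3$. I also note, in line with the comment after Theorem~\ref{thm:main}, that the corollary does not contradict the weaker bound $\lambda_1\geq (m-1)/2$ in \eqref{eq:Wang-half}.

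There is no substantive obstacle here. The only points to check are the index shift between $n$ and $m$, and that the paper's convention for $\II$ agrees with the convexity convention in \cite{Wang2021}; the introduction fixes the convention and says the notation matches.
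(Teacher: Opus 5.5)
Your proposal is correct and is exactly the paper's argument: the corollary follows by applying Theorem~\ref{thm:main} with $n=m-1$ and $M=\Sph^m_{+}$, with $\II_{g_j}>0$ read as strict convexity. The index shift and the sign convention for $\II$ are the only points to check, and both work as you describe.
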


\begin{proof}
Apply Theorem~\ref{thm:main} with $n=m-1$ and $M=\Sph^m_+$.
\end{proof}

\begin{remark}[Why dimension two is different]\label{rem:dim-two}
When $\dim M=2$, the hypotheses become $K\geq1$ and nonnegative boundary geodesic curvature.  The boundary comparison theorem forces the boundary to be connected and gives $L(\partial M)\leq2\pi$ \cite[Theorem~4]{HangWang2009}; see also \cite{Toponogov1959}.  The first nonzero eigenvalue of a circle of length $L$ is $(2\pi/L)^2$, so $\lambda_1(\partial M)\geq1$.  The counterexamples above begin precisely in ambient dimension three, corresponding to Zhu's range $n\geq2$.
\end{remark}

\begin{remark}\label{rem:further}
The proof of Theorem~\ref{thm:main} separates three geometric roles that are often entangled in boundary deformation problems.  Zhu's conformal factor \cite{Zhu2017} controls the boundary spectrum and the first-order Ricci lower bound.  The spherical height mode controls the second fundamental form without changing either of those data to the required order.  Finally, a constant rescaling of quadratic size converts the infinitesimal Ricci inequality into the exact normalized inequality.  The mismatch between the linear spectral gain and quadratic rescaling loss is what makes the construction stable.

It is natural to define the optimal universal constant
\[
 c_n:=
 \inf_{\substack{(M^{n+1},g)\ \text{compact and connected},\ \partial M\ne\varnothing\\
                   \Ric_g\geq ng,\ \II_g\geq0}}
 \lambda_1(\partial M,g|_{\partial M}).
\]
The Reilly argument \cite{Reilly1977} gives $c_n\geq n/2$, while Theorem~\ref{thm:main} gives $c_n<n$.  Determining $c_n$ remains open.  The construction here is perturbative and does not determine how close to $n/2$ the first boundary eigenvalue can be driven under strict convexity.
\end{remark}

   {\noindent \bf  ORCID} Fagui Li:
   https://orcid.org/0000-0002-6733-7164
\\

 {\noindent \bf  ORCID} Yuhang Zhao:
   https://orcid.org/0009-0002-8812-0594

\end{document}